\newcommand{\vh}{{\bf h}}
\newcommand{\supp}{\text{supp}}
\begin{document}
	\large
	
	\title{Quasi-Irreducibility of Nonnegative Biquadratic Tensors}
\author{Liqun Qi\footnote{Jiangsu Provincial Scientific Research Center of Applied Mathematics, Nanjing, Jiangsu, China.  Department of Applied Mathematics, The Hong Kong Polytechnic University, Hung Hom, Kowloon, Hong Kong.
		({\tt maqilq@polyu.edu.hk}).}
	\and  	Chunfeng Cui\footnote{LMIB of the Ministry of Education, School of Mathematical Sciences, Beihang University, Beijing 100191 China.
		({\tt chunfengcui@buaa.edu.cn}).}
	\and {and \
		Yi Xu\footnote{Southeast University, Nanjing, Jiangsu, China. Nanjing Center for Applied Mathematics, Nanjing, Jiangsu, China. Jiangsu Provincial Scientific Research Center of Applied Mathematics, Nanjing, Jiangsu, China.({\tt yi.xu1983@hotmail.com})}
	}
}
\date{\today}
\maketitle

\begin{abstract}

While the adjacency tensor of a bipartite 2-graph is a nonnegative biquadratic tensor, it is inherently reducible. To address this limitation, we introduce the concept of  quasi-irreducibility in this paper.   The adjacency tensor of a bipartite 2-graph is quasi-irreducible if that bipartite 2-graph is not bi-separable.  This new concept reveals important spectral properties:
although all   M$^+$-eigenvalues   are M$^{++}$-eigenvalues for irreducible nonnegative biquadratic tensors, the  M$^+$-eigenvalues of a quasi-irreducible nonnegative biquadratic tensor can be either M$^0$-eigenvalues or M$^{++}$-eigenvalues.
Furthermore, we establish a max-min theorem for the M-spectral radius of a nonnegative biquadratic tensor.

\medskip


\textbf{Key words.} Nonnegative biquadratic tensors,  bipartite 2-graphs, quasi-irreducibility, M$^0$-eigenvalues, M$^{++}$-eigenvalues, max-min  theorem.

\medskip
\textbf{AMS subject classifications.} 47J10, 15A18, 47H07, 15A72.
\end{abstract}

\renewcommand{\Re}{\mathds{R}}
\newcommand{\rank}{\mathrm{rank}}
\newcommand{\X}{\mathcal{X}}
\newcommand{\A}{\mathcal{A}}
\newcommand{\I}{\mathcal{I}}
\newcommand{\B}{\mathcal{B}}
\newcommand{\PP}{\mathcal{P}}
\newcommand{\C}{\mathcal{C}}
\newcommand{\D}{\mathcal{D}}
\newcommand{\LL}{\mathcal{L}}
\newcommand{\OO}{\mathcal{O}}
\newcommand{\e}{\mathbf{e}}
\newcommand{\0}{\mathbf{0}}
\newcommand{\1}{\mathbf{1}}
\newcommand{\dd}{\mathbf{d}}
\newcommand{\ii}{\mathbf{i}}
\newcommand{\jj}{\mathbf{j}}
\newcommand{\kk}{\mathbf{k}}
\newcommand{\va}{\mathbf{a}}
\newcommand{\vb}{\mathbf{b}}
\newcommand{\vc}{\mathbf{c}}
\newcommand{\vq}{\mathbf{q}}
\newcommand{\vg}{\mathbf{g}}
\newcommand{\pr}{\vec{r}}
\newcommand{\pc}{\vec{c}}
\newcommand{\ps}{\vec{s}}
\newcommand{\pt}{\vec{t}}
\newcommand{\pu}{\vec{u}}
\newcommand{\pv}{\vec{v}}
\newcommand{\pn}{\vec{n}}
\newcommand{\pp}{\vec{p}}
\newcommand{\pq}{\vec{q}}
\newcommand{\pl}{\vec{l}}
\newcommand{\vt}{\rm{vec}}
\newcommand{\x}{\mathbf{x}}
\newcommand{\vx}{\mathbf{x}}
\newcommand{\vy}{\mathbf{y}}
\newcommand{\vu}{\mathbf{u}}
\newcommand{\vv}{\mathbf{v}}
\newcommand{\y}{\mathbf{y}}
\newcommand{\vz}{\mathbf{z}}
\newcommand{\T}{\top}
\newcommand{\R}{\mathcal{R}}
\newcommand{\Q}{\mathcal{Q}}

\newtheorem{Thm}{Theorem}[section]
\newtheorem{Def}[Thm]{Definition}
\newtheorem{Ass}[Thm]{Assumption}
\newtheorem{Lem}[Thm]{Lemma}
\newtheorem{Prop}[Thm]{Proposition}
\newtheorem{Cor}[Thm]{Corollary}
\newtheorem{example}[Thm]{Example}
\newtheorem{remark}[Thm]{Remark}

\section{Introduction}

Very recently, Cui and Qi \cite{CQ25} studied {the} spectral properties of nonnegative biquadratic tensors.   They {showed} that a nonnegative biquadratic tensor has at least one M$^+$-eigenvalue, i.e., an M-eigenvalue with a pair of nonnegative M-eigenvectors.  The largest M-eigenvalue of a nonnegative biquadratic tensor is an M$^+$-eigenvalue.  It is also the M-spectral radius of that tensor.  All the {M$^+$}-eigenvalues of an irreducible nonnegative biquadratic tensor are M$^{++}$-eigenvalues, i.e., M-eigenvalues with positive M-eigenvector pairs.   For an irreducible nonnegative biquadratic tensor, the largest M$^+$-eigenvalue has a max-min characterization, while the smallest M$^+$-eigenvalue has a min-max characterization.
A Collatz algorithm for computing the largest   M$^+$-eigenvalues was  proposed  and numerical results were reported in \cite{CQ25}.  These results enriched the theories of nonnegative tensors.

Irreducibility plays an important role in the {theoretical analysis of} nonnegative {matrices and tensors}.  {However,} for nonnegative tensors arising from spectral hypergraph theory, irreducibility is too strong \cite{QL17}.   Then weak irreducibility introduced by Friedland, Gaubert and Han \cite{FGH13} {was adopted as an alternative.}
However, we found that the direct extension of weak irreducibility to biquadratic tensors is {too} weak {to produce useful results}.
Consequently, in this paper, we propose the {concept} quasi-irreducibility. Our definition is motivated by bipartite 2-graphs and the concepts of $x$- and $y$-reducibility proposed in \cite{QC25}.

In the next section, we {present} some preliminary knowledge for this paper.   In Section 3, we study bipartite 2-graphs.   We introduce quasi-irreducibility and M$^0$-eigenvalues of nonnegative biquadratic tensors in Section 4.  We show there that the adjacency tensors of bipartite 2-graphs {that} are not bi-separable  are quasi-irreducible.  We further prove that M$^+$-eigenvalues of quasi-irreducible nonnegative biquadratic tensors are either M$^0$-eigenvalues or M$^{++}$-eigenvalues.

In Section 5, we establish a max-min theorem for the M-spectral radius of a nonnegative biquadratic tensor.  In Section 6, we discuss the problem for computing the largest M-eigenvalue of a nonnegative biquadratic tensor.

\section{Preliminaries}

Let $m$ and $n$ be integers greater than $1$. {Denote $[n] := \{ 1, {\dots,} n \}$.}  A real fourth order  
tensor $\A = (a_{i_1j_1i_2j_2}) \in \Re^{m \times n \times m \times n}$ is said to be a  biquadratic tensor.   If for $i_1, i_2 \in [m]$ and $j_1, j_2\in [n]$,
$$a_{i_1j_1i_2j_2} = a_{i_2j_2i_1j_1},$$
then  $\A$ is said to be weakly symmetric.  If furthermore for $i_1, i_2 \in [m]$ and $j_1, j_2\in [n]$,
$$a_{i_1j_1i_2j_2} = a_{i_2j_1i_1j_2}{=a_{i_1j_2i_2j_1}},$$
then $\A$ is said to be symmetric.
Let $BQ(m, n)$ denote   the set of all biquadratic tensors in $\Re^{m \times n \times m \times n}$.
Furthermore, let $NBQ(m,n)$ denote the set of nonnegative $(m\times n\times m\times n)$-dimensional biquadratic tensors.

A {biquadratic} tensor {$\A\in BQ(m,n)$} is  said to be positive semi-definite if for any $\x \in \Re^m$ and $\y \in \Re^n$,
\begin{equation}\label{equ:PSD}
{f(\x, \y) \equiv} \langle \A, \x \circ \y \circ \x \circ \y \rangle \equiv \sum_{i_1, i_2 =1}^m \sum_{j_1, j_2 = 1}^n a_{i_1j_1i_2j_2}x_{i_1}y_{j_1}x_{i_2}y_{j_2} \ge 0,
\end{equation}
and it is said to be  positive definite if for any $\x \in \Re^m, \x^\top \x = 1$ and $\y \in \Re^n, \y^\top \y = 1$,
$${f(\x, \y) > 0.}$$
The {biquadratic} tensor $\A$ is called an SOS (sum-of-squares) biquadratic tensor if $f(\x,\y)$ can be written as a sum of squares.

In 2009, Qi, Dai and Han \cite{QDH09} introduced M-eigenvalues and M-eigenvectors for symmetric biquadratic tensors during their investigation of strong ellipticity condition of the {elastic} tensor in solid mechanics.
Recently, Qi and Cui \cite{QC25} generalized M-eigenvalues to  general (nonsymmetric) biquadratic tensors.  This {definition} was motivated by the study of covariance tensors in statistics \cite{CHHS25}, which are not symmetric {(only weakly symmetric)}, {yet remains} positive semi-definite.

Suppose that $\A = (a_{i_1j_1i_2j_2}) \in BQ(m, n)$.   A   {real} number $\lambda$ is said to be an  {M-eigenvalue} of $\A$ if there are  {real} vectors  $\x = (x_1, {\dots,} x_m)^\top \in {\Re}^m, \y = (y_1, {\dots,} y_n)^\top \in {\Re}^n$ such that the following equations are satisfied:
For {any} $i {\in [m]}$,
\begin{equation} \label{e5}
\sum_{i_1=1}^m \sum_{j_1, j_2=1}^n a_{i_1j_1ij_2}x_{i_1}y_{j_1}y_{j_2} +	\sum_{i_2=1}^m \sum_{j_1, j_2=1}^n a_{ij_1i_2j_2}y_{j_1}x_{i_2}y_{j_2} = 2\lambda x_i;
\end{equation}
for any $j {\in [n]}$,
\begin{equation} \label{e6}
\sum_{i_1,i_2=1}^m\sum_{j_1=1}^n a_{i_1j_1i_2j}x_{i_1}y_{j_1}x_{i_2} +	\sum_{i_1,i_2=1}^m\sum_{j_2=1}^n a_{i_1ji_2j_2}x_{i_1}x_{i_2}y_{j_2} = 2\lambda y_j;
\end{equation}
and
\begin{equation} \label{e7}
\x^\top \x ={ \y^\top \y} = 1.
\end{equation}
Then $\x$ and $\y$ are called the corresponding  {M-eigenvectors}.
We may   rewrite equations \eqref{e5} and \eqref{e6} as $\frac12\A\cdot\vy\vx\vy+\frac12\A\vx\vy\cdot\vy=\lambda \vx$ and $\frac12\A\vx\cdot\vx\vy+\frac12\A\vx\vy\vx\cdot=\lambda\vy$,  respectively.

The following theorem was established in \cite{QC25}.

\begin{Thm} \label{T2.1}
Suppose that $\A \in BQ(m, n)$.  Then $\A$ always {has} M-eigenvalues.  Furthermore, $\A$ is positive semi-definite if and only if all of its M-eigenvalues are nonnegative, {and} $\A$ is positive definite if and only if all of its M-eigenvalues are positive.
\end{Thm}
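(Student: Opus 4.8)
The plan is to recognize the M-eigenvalue equations \eqref{e5}--\eqref{e6} as the stationarity (Lagrange/KKT) conditions of the scalar function $f(\x,\y) = \langle \A, \x\circ\y\circ\x\circ\y\rangle$ on the product of unit spheres $S = \{(\x,\y)\in\Re^m\times\Re^n : \x^\top\x = \y^\top\y = 1\}$, and then to read off all three assertions from the behaviour of $f$ on $S$.

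First I would verify by direct differentiation that $\partial f/\partial x_i$ equals the left-hand side of \eqref{e5} and $\partial f/\partial y_j$ equals the left-hand side of \eqref{e6}. The set $S$ is a smooth compact manifold on which the constraint gradients $2\x$ and $2\y$ are nonzero and act on disjoint blocks of coordinates, so the Lagrange multiplier rule applies at any extremum: a maximizer (resp.\ minimizer) $(\x,\y)$ of $f$ on $S$ satisfies $\nabla_\x f = 2\mu\x$ and $\nabla_\y f = 2\nu\y$ for some scalars $\mu,\nu$. Since $f$ is homogeneous of degree two in $\x$ and of degree two in $\y$, Euler's identity gives $\x^\top\nabla_\x f = 2f(\x,\y)$ and $\y^\top\nabla_\y f = 2f(\x,\y)$; contracting the two stationarity relations with $\x$ and $\y$ respectively forces $\mu = \nu = f(\x,\y)$. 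Hence $\lambda := f(\x,\y)$ is an M-eigenvalue, and because $f$ is continuous and $S$ is compact such an extremum, and therefore an M-eigenvalue, always exists.

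The same contraction, applied to \eqref{e5}--\eqref{e7} directly (multiply \eqref{e5} by $x_i$ and sum, multiply \eqref{e6} by $y_j$ and sum), shows that \emph{every} M-eigenvalue $\lambda$, not only the extremal ones, satisfies $\lambda = f(\x,\y)$ for its eigenvector pair. This yields the ``only if'' directions at once: if $\A$ is positive semi-definite (resp.\ positive definite) then $\lambda = f(\x,\y)\ge 0$ (resp.\ $>0$) for every M-eigenvalue. For the ``if'' directions I would argue by contraposition. If $\A$ is not positive semi-definite, pick $\x_0\neq\0$, $\y_0\neq\0$ with $f(\x_0,\y_0)<0$; by the separate degree-two homogeneity of $f$ in each argument, passing to $\x_0/\|\x_0\|$ and $\y_0/\|\y_0\|$ keeps the value negative, so $\min_S f<0$, and by the previous paragraph the minimizer produces a negative M-eigenvalue. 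The positive-definite case is identical, starting from a unit pair with $f(\x_0,\y_0)\le 0$ and producing a nonpositive M-eigenvalue.

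The argument is essentially routine once this variational picture is set up; the only points requiring care are (i) checking that $\nabla f$ really reproduces the ``symmetrized'' coefficient combinations appearing in \eqref{e5}--\eqref{e6} (this is precisely why the M-eigenvalue equations are written the way they are, and no weak symmetry of $\A$ is needed), and (ii) confirming the constraint qualification so that the multiplier rule is valid on $S$. I expect (i) to be the spot where a careless computation could slip, but it is a finite, mechanical check.
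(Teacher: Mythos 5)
Your proposal is correct: the variational/Lagrange-multiplier argument on the product of unit spheres, combined with the contraction identity $\lambda = f(\x,\y)$ obtained by multiplying \eqref{e5} by $x_i$ and \eqref{e6} by $y_j$ and summing, is exactly the standard proof of this result. The paper itself does not reprove Theorem~\ref{T2.1} but quotes it from \cite{QC25}, where the argument is the same as yours, so there is nothing to add.
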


Let $\A \in BQ(m, n)$
{and $\lambda_{\max}(\A)$ be the largest M-eigenvalue of $\A$.  Then we have
\begin{equation} \label{lamax}
	\lambda_{\max}(\A) = \max \{ f(\x, \y) : \x^\top \x = \y^\top \y = 1, \x \in \Re^m, \y \in \Re^n \}.
\end{equation}
}
{Denote by} $\rho_M(\A)$  the M-spectral radius of $\A$, i.e., the largest absolute value among all M-eigenvalues of $\A$.

{Suppose} that $\lambda$ is an M-eigenvalue of $\A$ {associated} with a pair of nonnegative M-eigenvectors $\x \in \Re_+^m$ and $\y \in \Re_+^n$.   Then $\lambda$ is also nonnegative, i.e., $\lambda \ge 0$.    We call $\lambda$ {an} M$^+$-eigenvalue of $\A$. Furthermore, if both $\x$ and $\y$ are positive, we call $\lambda$ an M$^{++}$-eigenvalue of $\A$.

The following theorems, which form the weak Perron-Frobenius theorem of irreducible nonnegative biquadratic tensors,  were established in \cite{CQ25}.

\begin{Thm}\label{Thm:rho=lmd_max}
Let $\A = \left( a_{i_1j_1i_2j_2}\right) \in NBQ(m, n)$, where $m, n \ge 2$.  Then we have
\begin{equation} \label{rholamax}
\rho_M(\A) = \lambda_{\max}(\A),
\end{equation}
and $\lambda_{\max}(\A)$ is an M$^+$-eigenvalue of $\A$. {Consequently,   $\A$ has at least one  M$^+$-eigenvalue.}
\end{Thm}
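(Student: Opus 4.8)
The plan is to establish \eqref{rholamax} and the M$^+$-eigenvalue claim directly from the variational characterization \eqref{lamax} of $\lambda_{\max}(\A)$. First I would observe that the functional $f(\x,\y)$ in \eqref{equ:PSD} is continuous and the constraint set $S := \{(\x,\y) : \x^\top\x = \y^\top\y = 1\}$ is compact, so the maximum in \eqref{lamax} is attained at some $(\x^*,\y^*)$. By Theorem~\ref{T2.1}, $\A$ has M-eigenvalues, and by \eqref{lamax} the largest of them equals $\max_S f$; moreover a standard Lagrange-multiplier argument (differentiating $f$ on the product of unit spheres) shows that any maximizer $(\x^*,\y^*)$ satisfies \eqref{e5}--\eqref{e7} with $\lambda = f(\x^*,\y^*) = \lambda_{\max}(\A)$, so $\lambda_{\max}(\A)$ is genuinely an M-eigenvalue. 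This reuses the machinery behind Theorem~\ref{T2.1}, which I may assume.

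Next I would prove $\rho_M(\A) = \lambda_{\max}(\A)$. Since $\rho_M(\A)$ is by definition the largest absolute value of an M-eigenvalue and $\lambda_{\max}(\A)$ is an M-eigenvalue, one inequality $\rho_M(\A) \ge \lambda_{\max}(\A)$ is immediate (noting $\lambda_{\max}(\A) \ge 0$, which follows by evaluating $f$ at, say, $\x = \e_1/1$, $\y = \e_1$ and using $a_{1111} \ge 0$, or more simply because $f$ is an average of nonnegative entries on nonnegative arguments). For the reverse inequality, let $\mu$ be any M-eigenvalue with M-eigenvectors $\x,\y$ on the unit spheres. Then from \eqref{e5}--\eqref{e6}, contracting \eqref{e5} with $x_i$ and \eqref{e6} with $y_j$ and summing, one gets $f(\x,\y) = \mu\,\x^\top\x = \mu$ (using $\x^\top\x = \y^\top\y = 1$ and the bookkeeping that each of the two terms in \eqref{e5} and each in \eqref{e6} contributes a copy of $f$, for a total of $4f$ on the left and $4\mu$ on the right). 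Hence $\mu = f(\x,\y)$. Now replace $\x,\y$ by $|\x|,|\y|$ (entrywise absolute values, still unit vectors): since $\A \ge 0$, $f(|\x|,|\y|) \ge |f(\x,\y)| = |\mu|$, and by \eqref{lamax}, $f(|\x|,|\y|) \le \lambda_{\max}(\A)$. Therefore $|\mu| \le \lambda_{\max}(\A)$ for every M-eigenvalue $\mu$, giving $\rho_M(\A) \le \lambda_{\max}(\A)$, and combined with the previous inequality, \eqref{rholamax} follows.

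Finally, for the assertion that $\lambda_{\max}(\A)$ is an M$^+$-eigenvalue, I would argue that the maximum in \eqref{lamax} can be attained at a \emph{nonnegative} pair. Indeed, if $(\x^*,\y^*)$ is any maximizer, then $(|\x^*|,|\y^*|)$ lies in $S$ and satisfies $f(|\x^*|,|\y^*|) \ge f(\x^*,\y^*) = \lambda_{\max}(\A)$ because $\A \ge 0$; by maximality equality holds, so $(|\x^*|,|\y^*|)$ is also a maximizer, hence an M-eigenvector pair for $\lambda_{\max}(\A)$ by the Lagrange argument above, and it is nonnegative. This exhibits $\lambda_{\max}(\A)$ as an M$^+$-eigenvalue, and since $\lambda_{\max}(\A)$ always exists (Theorem~\ref{T2.1}), $\A$ has at least one M$^+$-eigenvalue. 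The main obstacle I anticipate is the careful verification that a constrained maximizer of $f$ on the product of unit spheres really does solve the M-eigenvalue system \eqref{e5}--\eqref{e7} with the symmetrization conventions used here (the factor-of-$2$ and the two-term structure coming from weak symmetry not being assumed); this is a routine but notation-heavy Lagrange-multiplier computation, and it is essentially the content of Theorem~\ref{T2.1}, which I am entitled to invoke, so in the write-up I would lean on that theorem rather than redo it.
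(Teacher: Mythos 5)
Your proposal is mathematically sound, but note that the paper does not actually prove this theorem: it is quoted verbatim from the companion work \cite{CQ25} (``The following theorems \dots were established in \cite{CQ25}''), so there is no in-paper proof to compare against. What you have written is the standard self-contained argument one would expect to find in that reference: attainment of the maximum in \eqref{lamax} by compactness, the Lagrange/first-order conditions identifying any maximizer as an M-eigenvector pair with eigenvalue $f(\x^*,\y^*)$ (the contraction $\x^\top\nabla_\x f = 2f$ correctly forces both multipliers to equal $f$), the inequality $|\mu| = |f(\x,\y)| \le f(|\x|,|\y|) \le \lambda_{\max}(\A)$ for every M-eigenvalue $\mu$ using nonnegativity of the entries, and the observation that $(|\x^*|,|\y^*|)$ is again a maximizer and hence a nonnegative M-eigenvector pair. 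All steps check out, including the needed fact $\lambda_{\max}(\A)\ge 0$. The only cosmetic caveat is that you lean on Theorem~\ref{T2.1} for the maximizer-implies-eigenvector step while that theorem as stated only asserts existence of M-eigenvalues and the PSD characterization; strictly you are invoking the Lagrange computation behind it rather than its statement, which you acknowledge and which is indeed routine.
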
	

\begin{Thm}\label{Thm:eigpair_positive}
Suppose that $\A = \left(a_{i_1j_1i_2j_2}\right) \in NBQ(m, n)$ is irreducible, where $m,n\ge 2$.  
{Assume that} $\lambda$  is  an M$^+$ eigenvalue of $\A$  with nonnegative M-eigenvector  $\{\bar \vx, \bar \vy\}$.    Then $\lambda$ is a positive M$^{++}$-eigenvalue.
{Consequently,   $\A$ has at least one  M$^{++}$-eigenvalue.}
\end{Thm}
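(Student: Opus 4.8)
The plan is to run a Perron--Frobenius style support argument. Write the nonnegative M-eigenvector pair as $\bar\vx = (\bar x_i) \in \Re_+^m$ and $\bar\vy = (\bar y_j) \in \Re_+^n$; by \eqref{e7} both are nonzero, so $I := \{i \in [m] : \bar x_i = 0\}$ and $J := \{j \in [n] : \bar y_j = 0\}$ are proper subsets. The heart of the proof is to show that $I = \emptyset$ and $J = \emptyset$; once that is done, $\lambda > 0$ follows easily.

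For the zero pattern: fix $i \in I$. Then the right-hand side of \eqref{e5} is $2\lambda\bar x_i = 0$, while every summand on the left is a product of nonnegative numbers since $\A \ge 0$ and $\bar\vx, \bar\vy \ge 0$; hence each summand vanishes. This yields $a_{i_1 j_1 i j_2} = a_{i j_1 i_2 j_2} = 0$ whenever $i \in I$, $i_1, i_2 \notin I$, and $j_1, j_2 \notin J$. Applying the same reasoning to \eqref{e6} for $j \in J$ gives $a_{i_1 j_1 i_2 j} = a_{i_1 j i_2 j_2} = 0$ whenever $j \in J$, $j_1, j_2 \notin J$, and $i_1, i_2 \notin I$; and by the weak symmetry $a_{i_1j_1i_2j_2} = a_{i_2j_2i_1j_1}$ each of these two families can be rewritten with the index coming from $I$ (resp.\ $J$) placed in the first $x$-slot (resp.\ first $y$-slot). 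Thus a whole ``crossing'' block of $\A$ between $I$ and $[m]\setminus I$ (resp.\ between $J$ and $[n]\setminus J$) vanishes, at least after restricting the remaining indices to the complements.

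I would then contradict this with the irreducibility of $\A$. If $J = \emptyset$, the first family reads $a_{i j_1 i_2 j_2} = 0$ for all $j_1, j_2 \in [n]$ whenever $i \in I$ and $i_2 \notin I$, i.e.\ $\A$ is $x$-reducible along $I$; symmetrically, if $I = \emptyset$ the second family exhibits a $y$-reducibility along $J$. Either way $I$ and $J$ are forced to be empty. The delicate case --- and the one I expect to be the main obstacle --- is when $I \ne \emptyset$ and $J \ne \emptyset$ simultaneously: there the vanishing is only known when the free indices avoid $I$ and $J$, so a direct appeal to the definition of irreducibility may not suffice. In that case I plan to propagate the zero pattern: restrict $(\bar\vx, \bar\vy)$ to its support $([m]\setminus I) \times ([n]\setminus J)$, note that it is a positive M-eigenvector pair of the corresponding principal sub-tensor, and re-substitute the partial vanishing information back into \eqref{e5}--\eqref{e6} until a genuine $x$- or $y$-reducibility of the full tensor $\A$ emerges, contradicting irreducibility.

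Finally, with $\bar\vx > 0$ and $\bar\vy > 0$ in hand, multiplying \eqref{e5} by $\bar x_i$, summing over $i$, and using $\bar\vx^\top\bar\vx = \bar\vy^\top\bar\vy = 1$ gives $\lambda = f(\bar\vx, \bar\vy) = \sum_{i_1,i_2,j_1,j_2} a_{i_1j_1i_2j_2}\bar x_{i_1}\bar y_{j_1}\bar x_{i_2}\bar y_{j_2}$. If this were zero, every entry $a_{i_1j_1i_2j_2}$ would vanish, so $\A$ would be the zero tensor, which is not irreducible; hence $\lambda > 0$, and $\lambda$ is an M$^{++}$-eigenvalue. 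The concluding assertion that $\A$ has at least one M$^{++}$-eigenvalue then follows by applying what we just proved to the M$^+$-eigenvalue $\lambda_{\max}(\A)$ supplied by Theorem~\ref{Thm:rho=lmd_max}.
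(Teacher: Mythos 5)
This theorem is imported from \cite{CQ25}; the present paper states it without proof, so your argument can only be measured against the closely analogous in-paper proof of Theorem~\ref{Thm:QNBQ}, which uses exactly the support argument you propose. Your overall plan --- deduce entrywise vanishing from $2\lambda\bar x_i=0$ for $i$ in the zero set $I$, contradict irreducibility, then conclude $\lambda=f(\bar\vx,\bar\vy)>0$ --- is the right one, and your derivation of the vanishing families and of the final positivity of $\lambda$ is correct.

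The genuine gap is that you declare the case $I\neq\emptyset$, $J\neq\emptyset$ ``delicate'' and defer it to an unspecified propagation scheme; that is not a proof, and the case is in fact where the whole theorem lives. It closes immediately once you use the paper's actual definition of reducibility rather than the stronger ``whole crossing block vanishes'' notion you seem to have in mind. By \eqref{equ:x_reducible}, $x$-reducibility asks only for a \emph{single} index $j\in[n]$ with $a_{i_2ji_1j}+a_{i_1ji_2j}=0$ for all $i_1\in J_x$, $i_2\notin J_x$ --- only entries whose two $y$-slots carry that same $j$ are involved. Your vanishing families give $a_{ij_1i_2j_2}=0$ and $a_{i_2j_1ij_2}=0$ for all $i\in I$, $i_2\notin I$, $j_1,j_2\notin J$. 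Since $\bar\vy\neq\0_n$ by \eqref{e7}, $J$ is a proper subset of $[n]$; choosing any $j\notin J$ and setting $j_1=j_2=j$ yields $a_{iji_2j}=a_{i_2jij}=0$ for all $i\in I$, $i_2\notin I$, i.e., $\A$ is $x$-reducible with $J_x=I$. Contradiction, so $I=\emptyset$, and symmetrically $J=\emptyset$; no restriction to a sub-tensor or iteration is needed. (This is precisely why the irreducible case needs only $|\supp(\bar\vy)|\ge 1$, whereas the quasi-irreducible Theorem~\ref{Thm:QNBQ} needs two distinct indices $j_1\neq j_2$ outside $J$ and hence must carve out the M$^0$ case.) Two minor further points: your appeal to weak symmetry is both unwarranted ($\A\in NBQ(m,n)$ is not assumed weakly symmetric) and unnecessary, since the two sums in \eqref{e5} already supply both orderings $a_{iji_2j}$ and $a_{i_2jij}$; and in the last step, $f(\bar\vx,\bar\vy)=0$ with $\bar\vx,\bar\vy>0$ forces $\A$ to be the zero tensor, which is indeed reducible for $m,n\ge 2$, so $\lambda>0$ follows as you say, and the final claim follows from Theorem~\ref{Thm:rho=lmd_max}.
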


\section{Bipartite $2$-Graphs}

A bipartite hypergraph $G = (S, T, E)$ has two vertex sets $S = \{ u_1, u_2, \dots, u_m \}$, $T = \{ v_1, v_2, \dots, v_n \}$ and an edge set $E = \{ e_1, e_2, \dots, e_p \}$.   An edge $e_l = (s_l, t_l)$ of $G$ consists of a subset $s_l \subset S$ and a subset $t_l \subset T$.  Assume that there are no two edges with the same subset pair of $S$ and $T$.  Bipartite hypergraphs are useful in the study of uniform hypergraphs \cite{CD12, HQX15,QL17}.

Suppose that we have a bipartite hypergraph $G = (S, T, E)$.  If for all edge $e_l = (s_l, t_l), l \in [p]$, $s_l$ and $t_l$ have the same cardinality $k$, then $G$ is called a bipartite uniform hypergraph or a bipartite $k$-graph.  In particular, a bipartite $1$-graph is simply called a bipartite graph, which has been studied extensively.  We now study bipartite $2$-graphs.

Suppose that we have a bipartite $2$-graph $G = (S, T, E)$.
We may express $G$ by a biquadratic tensor $\A = \left(a_{i_1j_1i_2j_2}\right){\in NBQ(m,n)}$ as follows:
\begin{equation}\label{equ:adj}
a_{i_1j_1i_2j_2} =\left\{\begin{array}{cl}
1, & \text{ if }e_{i_1j_1i_2j_2} = (s_{i_1i_2}, t_{j_1j_2}) \in E;\\
0, & \text{ otherwise},
\end{array}\right.
\end{equation}	
where $s_{i_1i_2} = (i_1, i_2)$ and $t_{j_1j_2} = (j_1, j_2)$.
Then $\A$ is a symmetric nonnegative biquadratic tensor.
Given a nonnegative valued function $\varphi: E \rightarrow \Re_+$, we may also define the  adjacency tensor $\A=(a_{i_1j_1i_2j_2}){\in NBQ(m,n)}$ of a weighted graph $G=(S,T,E,\varphi)$ as follows,
\begin{equation}\label{equ:adj_phi}
a_{i_1j_1i_2j_2} =\left\{\begin{array}{cl}
\varphi(e_{i_1j_1i_2j_2}), & \text{ if } e_{i_1j_1i_2j_2} = (s_{i_1i_2}, t_{j_1j_2}) \in E;\\
0, & \text{ otherwise},
\end{array}\right.
\end{equation}	
{where $s_{i_1i_2} = (i_1, i_2)$ and $t_{j_1j_2} = (j_1, j_2)$.}
Then $\A$ is still a nonnegative biquadratic tensor.

By \cite{CQ25}, a biquadratic tensor $\A = \left(a_{i_1j_1i_2j_2}\right) \in BQ(m, n)$ is reducible {if}
either  it is $x$-reducible, i.e., there {is a} nonempty proper index subset $J_x{\subsetneq}  [m]$ and a  proper index {$j\in[n]$} 
such that
\begin{equation}\label{equ:x_reducible}
{a_{i_2ji_1j}+	a_{i_1ji_2j}}=0, \ \forall i_1\in J_x, \forall i_2\notin J_x,
\end{equation}
or it is $y$-reducible, i.e., there is a  proper index {$i\in[m]$}
and a nonempty  proper index subset  $J_y{\subsetneq}  [n]$ such that
\begin{equation}\label{equ:y_reducible}
{a_{ij_1ij_2}+a_{ij_2ij_1}}=0, \ \forall j_1\in J_y, \forall j_2\notin J_y.
\end{equation}
Then, by the definition of bipartite $2$-graphs, their adjacency tensors are always reducible. Actually, according to our definition, for the entries of the adjacency tensor $\A = \left(a_{i_1j_1i_2j_2}\right)$ of a bipartite $2$-graph $G$, we always have $a_{i_1j_1i_2j_2} = 0$ if either $i_1 = i_2$ or $j_1=j_2$.  Therefore, the adjacency tensor $\A$ is always both $x$-reducible and $y$-reducible.  Consequently, as in the nonnegative cubic tensor case, we have to consider weaker conditions.

Given a bipartite $2$-graph  $G = (S, T, E,\varphi)$, we may also define 	its signless Laplacian biquadratic tensor {as follows,}
\begin{equation}\label{def:Q}
\Q=\D^0+\D^x+\D^y+\A \in NBQ(m,n),
\end{equation}
where $D^0=(d^0_{i_1j_1i_2j_2}){\in NBQ(m,n)}$ is a   diagonal biquadratic tensor with diagonal elements
\begin{equation}\label{def:d^0}
d^0_{i_1j_1i_2j_2} = 	 \left\{\begin{array}{cl}
\sum\limits_{i_2'=1}^m\sum\limits_{j_2'=1}^n    a_{i_1j_1 i_2'j_2'}, & \text{ if } i_1=i_2 \text{ and } j_1=j_2;\\
0, & \text{ otherwise},
\end{array}\right.
\end{equation}
and	 $D^x=(d^x_{i_1j_1i_2j_2}){\in NBQ(m,n)}$ and $D^y=(d^y_{i_1j_1i_2j_2}){\in NBQ(m,n)}$ are defined by
\begin{equation}\label{def:d^x}
d^x_{i_1j_1i_2j_2} = \left\{\begin{array}{cl}
\sum\limits_{i_2'=1}^m   a_{i_1j_1i_2'j_2}, & \text{ if }i_1=i_2;\\
0, & \text{ otherwise},
\end{array}\right.
\end{equation}
\begin{equation}\label{def:d^y}
d^y_{i_1j_1i_2j_2} = \left\{\begin{array}{cl}
\sum\limits_{j_2'=1}^n    a_{i_1j_1 i_2j_2'}, & \text{ if } j_1=j_2;\\
0, & \text{ otherwise},
\end{array}\right.
\end{equation}
for all $i,i_1,i_2\in[m]$ and $j,j_1,j_2\in[n]$,   respectively.

Similarly, we can define its Laplacian biquadratic tensor {as follows,}
\begin{equation}\label{def:L}
\LL=\D^0-\D^x-\D^y{+\A \in BQ(m,n).}
\end{equation}
Then we have the following results.
\begin{Lem}
Given a weighted bipartite 2-graph $G = (S, T, E,\varphi)$,  both	the signless Laplacian biquadratic tensor defined by \eqref{def:Q} and the  Laplacian biquadratic tensor defined by \eqref{def:L} are positive semi-definite and SOS.
\end{Lem}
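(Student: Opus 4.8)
The plan is to evaluate the two quartic forms $f_\Q(\x,\y) := \langle\Q,\x\circ\y\circ\x\circ\y\rangle$ and $f_\LL(\x,\y) := \langle\LL,\x\circ\y\circ\x\circ\y\rangle$ in closed form and to exhibit explicit sum-of-squares decompositions; positive semi-definiteness then follows immediately from \eqref{equ:PSD}, since an SOS form is nonnegative.

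First I would substitute the definitions \eqref{def:d^0}, \eqref{def:d^x}, \eqref{def:d^y} of the three diagonal blocks into the corresponding quartic forms. Since each block is supported only on its appropriate diagonal index set, the forms collapse, after relabelling the auxiliary summation indices, to
\begin{equation*}
\langle\D^0,\x\circ\y\circ\x\circ\y\rangle = \sum_{i_1,j_1,i_2,j_2} a_{i_1j_1i_2j_2}\,x_{i_1}^2 y_{j_1}^2,\qquad \langle\D^x,\x\circ\y\circ\x\circ\y\rangle = \sum_{i_1,j_1,i_2,j_2} a_{i_1j_1i_2j_2}\,x_{i_1}^2 y_{j_1}y_{j_2},
\end{equation*}
\begin{equation*}
\langle\D^y,\x\circ\y\circ\x\circ\y\rangle = \sum_{i_1,j_1,i_2,j_2} a_{i_1j_1i_2j_2}\,x_{i_1}x_{i_2} y_{j_1}^2,\qquad \langle\A,\x\circ\y\circ\x\circ\y\rangle = \sum_{i_1,j_1,i_2,j_2} a_{i_1j_1i_2j_2}\,x_{i_1}x_{i_2}y_{j_1}y_{j_2}.
\end{equation*}
Next I would symmetrize the first three sums by relabelling dummy indices, using that the adjacency tensor of a bipartite $2$-graph is symmetric, namely $a_{i_1j_1i_2j_2} = a_{i_2j_2i_1j_1} = a_{i_2j_1i_1j_2} = a_{i_1j_2i_2j_1}$; this rewrites them as $\tfrac14\sum a_{i_1j_1i_2j_2}(x_{i_1}^2+x_{i_2}^2)(y_{j_1}^2+y_{j_2}^2)$, $\tfrac12\sum a_{i_1j_1i_2j_2}(x_{i_1}^2+x_{i_2}^2)y_{j_1}y_{j_2}$ and $\tfrac12\sum a_{i_1j_1i_2j_2}x_{i_1}x_{i_2}(y_{j_1}^2+y_{j_2}^2)$ respectively, while the $\A$-form is already in this symmetric shape. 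Adding them with the signs prescribed by \eqref{def:Q} and \eqref{def:L} and applying, summand-by-summand, the elementary identity $(a^2+b^2)(c^2+d^2)\pm2(a^2+b^2)cd\pm2ab(c^2+d^2)+4abcd=\bigl((a\pm b)(c\pm d)\bigr)^2$, I obtain
\begin{equation*}
f_\Q(\x,\y) = \frac14\sum_{i_1,j_1,i_2,j_2} a_{i_1j_1i_2j_2}\bigl((x_{i_1}+x_{i_2})(y_{j_1}+y_{j_2})\bigr)^2,\qquad f_\LL(\x,\y) = \frac14\sum_{i_1,j_1,i_2,j_2} a_{i_1j_1i_2j_2}\bigl((x_{i_1}-x_{i_2})(y_{j_1}-y_{j_2})\bigr)^2.
\end{equation*}

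Finally, since $a_{i_1j_1i_2j_2}=\varphi(e_{i_1j_1i_2j_2})\ge0$ whenever it is nonzero, both right-hand sides are finite nonnegative combinations of squares of bilinear forms; collecting the four index orderings $(i_1,j_1,i_2,j_2)$, $(i_2,j_1,i_1,j_2)$, $(i_1,j_2,i_2,j_1)$, $(i_2,j_2,i_1,j_1)$ that represent a single edge $e_l=(s_{i_1i_2},t_{j_1j_2})\in E$ --- each contributing the same value --- the factor $\tfrac14$ cancels and
\begin{equation*}
f_\Q(\x,\y)=\sum_{e_l\in E}\varphi(e_l)\bigl((x_{i_1}+x_{i_2})(y_{j_1}+y_{j_2})\bigr)^2,\qquad f_\LL(\x,\y)=\sum_{e_l\in E}\varphi(e_l)\bigl((x_{i_1}-x_{i_2})(y_{j_1}-y_{j_2})\bigr)^2.
\end{equation*}
These are explicit SOS decompositions, so $\Q$ and $\LL$ are SOS biquadratic tensors; being nonnegative for every $\x\in\Re^m$ and $\y\in\Re^n$, they are positive semi-definite by \eqref{equ:PSD}. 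I expect the only genuinely delicate point to be the symmetrization bookkeeping --- checking that each ordering of a given edge enters $\D^0$, $\D^x$, $\D^y$ and $\A$ with exactly the multiplicity that makes the coefficients $\tfrac14$, $\tfrac12$, $\tfrac12$, $1$ come out right; once they do, the factorization is the stated algebraic identity.
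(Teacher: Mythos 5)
Your proof is correct and takes essentially the same route as the paper: both evaluate $\Q\vx\vy\vx\vy$ and $\LL\vx\vy\vx\vy$ directly and exhibit them as explicit sums of squares of the bilinear forms $(x_{i_1}\pm x_{i_2})(y_{j_1}\pm y_{j_2})$. Your more careful symmetrization bookkeeping actually supplies the factor $\tfrac14$ that the paper's displayed identities omit (a harmless typo there, since it does not affect nonnegativity or the SOS conclusion).
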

\begin{proof}
For any $\vx\in \Re^m$ and $\vy\in \Re^n$, we have
\begin{align*}
\Q\vx\vy\vx\vy  = \sum_{i_1,i_2=1}^m \sum_{j_1,j_2=1}^n a_{i_1j_1i_2j_2} (x_{i_1}+x_{i_2})^2(y_{j_1}+y_{j_2})^2,
\end{align*}
and
\begin{align*}
\LL\vx\vy\vx\vy  = \sum_{i_1,i_2=1}^m \sum_{j_1,j_2=1}^n a_{i_1j_1i_2j_2} (x_{i_1}-x_{i_2})^2(y_{j_1}-y_{j_2})^2.
\end{align*}
{This completes the proof. }
\end{proof}

{Let us further examine} the application backgrounds of bipartite $2$-graphs $G = (S, T, E)$.  We may think that $S$ is a set of persons, and $T$ is a set of tools.   Suppose that a working group requires exactly two persons $i_1$ and $i_2$, and two tools $j_1$ and $j_2$. While a working group consists of two persons and two tools, there exist compatibility constraints that prevent arbitrary pairs of persons and tools from forming valid groups. If two person $i_1, i_2$ and two tools $j_1$ and $j_2$ can form a working group, then we say that edge $(i_1, i_2, j_1, j_2)$ is in $E$.   We say that $S$ is $T$-separable if there exist a proper partition $(S_1, S_2)$ of $S$, and two vertices $j_1, j_2 \in T$ such that {for all} $i_1 \in S_1$ and $i_2 \in S_2$, $(i_1, i_2, j_1, j_2) \not \in E$.  Similarly, we say that $T$ is $S$-separable if there exist a proper partition $(T_1, T_2)$ of $T$, and two vertices $i_1, i_2 \in S$ such that {for all}  $j_1 \in T_1$ and $j_2 \in T_2$, $(i_1, i_2, j_1, j_2) \not \in E$.   If $S$ is $T$-separable, or $T$ is $S$-separable, then we say that $G$ is bi-separable.

\section{Quasi-Irreducible Nonnegative Biquadratic Tensors}

Let $\A = \left(a_{i_1j_1i_2j_2}\right) \in NBQ(m,n)$.
We say that $\A$ is  quasi-reducible {if it is}
either   $x$-quasi-reducible, i.e., there are a nonempty proper index subset $J_x{\subsetneq}  [m]$ and two distinct indices {$j_1, j_2 \in[n]$, $j_1 \not = j_2$} 
such that
\begin{equation}\label{equ:weak-x_reducible}
{a_{i_2j_1i_1j_2}+	a_{i_1j_1i_2j_2}}=0, \ \forall i_1\in J_x, \forall i_2\notin J_x.
\end{equation}
or it is $y$-quasi-reducible, i.e., there are  two distinct indices {$i_1, i_2 \in[m]$, $i_1 \not = i_2$} 
and a nonempty  proper index subset  $J_y{\subsetneq}  [n]$ such that
\begin{equation}\label{equ:weak-y_reducible}
{a_{i_1j_1i_2j_2}+a_{i_1j_2i_2j_1}}=0, \ \forall j_1\in J_y, \forall j_2\notin J_y.
\end{equation}
We say that $\A$ is quasi-irreducible if it is not quasi-reducible.

In fact, if $j_1=j_2$ in equation~(\ref{equ:weak-x_reducible}), then $x$-quasi-reducible becomes the  $x$-reducible in \cite{CQ25}; Similarly, if $i_1=i_2$ in equation~(\ref{equ:weak-y_reducible}), then $y$-quasi-reducible  becomes   the $y$-reducible in \cite{CQ25}.
Take $m=n=2$ as an example. Then $\A\in NBQ(2,2)$ is  $x$-reducible if
\begin{equation*}
a_{1121} + a_{2111} >0 \text{ and } a_{1222} + a_{2212} >0,
\end{equation*}
while $\A$ is  $x$-quasi-reducible if
\begin{equation*}
a_{1122} + a_{2112} >0 \text{ and } a_{1221} + a_{2211} >0.
\end{equation*}
Thus, these two definitions are fundamentally distinct, with neither being a special case of the other.
Our definition of quasi-irreducibility naturally encompasses hyperedges in {bipartite 2-graphs,} which in fact motivated our formulation.
{We illustrate the distinction between irreducible and quasi-irreducible nonnegative biquadratic tensors in Fig.~\ref{fig:BQreducible}.}
\begin{figure}
\begin{center}
\includegraphics[width=0.8\linewidth]{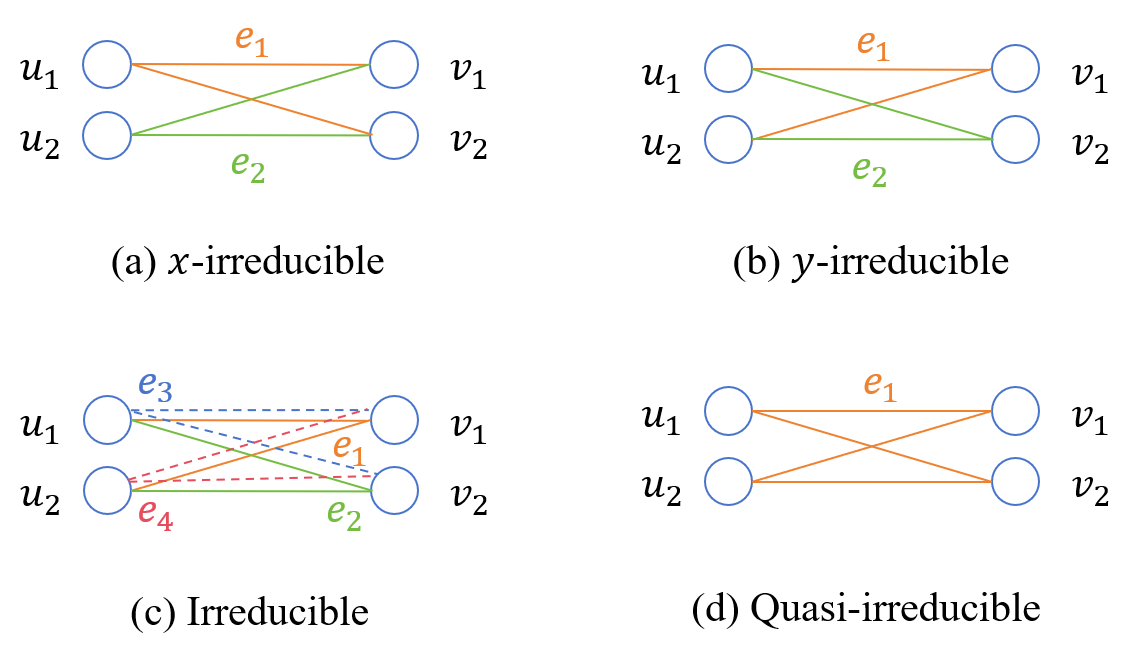}
\end{center}
\caption{{Illustration of $x$-irreducible, $y$-irreducible,      irreducible, and quasi-irreducible (i.e., both $x$- and $y$-quasi-irreducible) nonnegative biquadratic tensors for $m=n=2$.}}\label{fig:BQreducible}
\end{figure}


We have the following  {propositions.}

\begin{Prop}\label{prop:bi-sep_quasi-irr}
Suppose that {a} bipartite $2$-graph $G = (S, T, E)$ is not bi-separable, where $|S| \ge 2$ and $|T| \ge 2$, and $\A$ is the adjacency tensor of $G$.  Then $\A$ is a quasi-irreducible biquadratic tensor.
\end{Prop}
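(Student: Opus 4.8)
The plan is to prove the contrapositive: assuming $\A$ is quasi-reducible, I will exhibit a witness showing that $G$ is bi-separable. By definition, quasi-reducibility of $\A$ means it is $x$-quasi-reducible or $y$-quasi-reducible, and the two cases are mirror images of each other under interchanging the two vertex sets of $G$ (equivalently, the two coordinate pairs of $\A$), which also swaps ``$S$ is $T$-separable'' with ``$T$ is $S$-separable''. So it suffices to treat the $x$-quasi-reducible case and show it forces $S$ to be $T$-separable.

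Suppose then that there are a nonempty proper subset $J_x \subsetneq [m]$ and two distinct indices $j_1, j_2 \in [n]$ with $a_{i_2 j_1 i_1 j_2} + a_{i_1 j_1 i_2 j_2} = 0$ for all $i_1 \in J_x$ and all $i_2 \in [m] \setminus J_x$, as in \eqref{equ:weak-x_reducible}. The first step is to use nonnegativity of the adjacency tensor $\A$: both summands are $\ge 0$, so their sum vanishing forces $a_{i_1 j_1 i_2 j_2} = 0$ for every $i_1 \in J_x$ and $i_2 \in [m] \setminus J_x$. The second step is to read this back through \eqref{equ:adj}: $a_{i_1 j_1 i_2 j_2} = 0$ says exactly that the hyperedge $(s_{i_1 i_2}, t_{j_1 j_2})$, i.e. $(i_1, i_2, j_1, j_2)$, is not in $E$. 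The third step is to set $S_1 := \{ u_i : i \in J_x \}$ and $S_2 := \{ u_i : i \in [m] \setminus J_x \}$: since $J_x$ is nonempty and proper and $|S| \ge 2$, the pair $(S_1, S_2)$ is a proper partition of $S$, while $j_1, j_2$ are two genuinely distinct vertices of $T$ because $|T| \ge 2$. The conclusion of the second step is precisely that $(i_1, i_2, j_1, j_2) \notin E$ for all $u_{i_1} \in S_1$ and all $u_{i_2} \in S_2$, which is the definition of $S$ being $T$-separable; hence $G$ is bi-separable, the desired contradiction.

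I do not anticipate a genuine obstacle — the argument is a direct unwinding of the definitions. The only points requiring care are: matching the specific index pattern $a_{i_1 j_1 i_2 j_2}$ appearing in \eqref{equ:weak-x_reducible} with the hyperedge encoding \eqref{equ:adj}; splitting the two-term sum using nonnegativity rather than attempting to argue from a single entry; and, for the $y$-quasi-reducible case, running the mirror argument, where \eqref{equ:weak-y_reducible} together with nonnegativity yields $a_{i_1 j_1 i_2 j_2} = 0$ for all $j_1 \in J_y$ and $j_2 \in [n] \setminus J_y$, so that with $T_1 := \{ v_j : j \in J_y \}$, $T_2 := \{ v_j : j \in [n] \setminus J_y \}$ and the fixed distinct $i_1, i_2$ one concludes that $T$ is $S$-separable.
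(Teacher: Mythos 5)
Your proof is correct and takes essentially the same route as the paper's: argue by contraposition/contradiction that quasi-reducibility of $\A$, via nonnegativity of the entries and the encoding \eqref{equ:adj}, produces exactly the partition and vertex pair witnessing bi-separability. Your version is more carefully spelled out than the paper's (which is a one-line unwinding and, in the $x$-quasi-reducible case, even appears to name the wrong separability --- it says ``$T$ is $S$-separable'' where the data $J_x \subsetneq [m]$ and $j_1 \neq j_2 \in [n]$ in fact witness that $S$ is $T$-separable, as you correctly state).
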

\begin{proof}   By the definition given in \eqref{equ:adj}  in Section 2, {we have} $\A \in NBQ(m, n)$, where $m, n \ge 2$.  Suppose that $\A$ is $x$-quasi-reducible.  Then there are two indices $i_1, i_2 \in [m]$, such that $T$ is $S$-separable.  This contradicts our assumption on $G$.  Thus, $\A$ is  $x$-quasi-irreducible.   Similarly, we may show that $\A$ is  $y$-quasi-irreducible.   Hence, $\A$ is  quasi-irreducible.
\end{proof}

\begin{Prop}
Suppose that the bipartite $2$-graph $G = (S, T, E)$ is not bi-separable, where $|S| \ge 2$ and $|T| \ge 2$, and $\Q$ is the signless Laplacian tensor of $G$.  Then we have the following conclusions:
\begin{itemize}
\item[(i)] $\D^x$ is $x$-irreducible, but $y$-reducible and quasi-reducible;
\item[(ii)] $\D^y$ is $y$-irreducible, but $x$-reducible and quasi-reducible;
\item[(iii)] $\D^0$ is $x$- and $y$-reducible,   and $x$- and $y$-quasi-reducible;
\item[(iv)] $\Q$ is irreducible and quasi-irreducible.
\end{itemize}
\end{Prop}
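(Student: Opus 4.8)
The plan is to verify the four items one at a time, each by a direct appeal to the definitions of $x$- and $y$-reducibility in \eqref{equ:x_reducible}--\eqref{equ:y_reducible} and of $x$- and $y$-quasi-reducibility in \eqref{equ:weak-x_reducible}--\eqref{equ:weak-y_reducible}, together with the sparsity patterns of $\D^0$, $\D^x$, $\D^y$ read off from \eqref{def:d^0}--\eqref{def:d^y} and the hypothesis that $G$ is not bi-separable. The first observation I would record is that, in each of those four (quasi-)reducibility conditions, the two tensor entries on the left-hand side always carry one outer index inside the separating set and one outside it, hence a pair of \emph{distinct} indices. Since the entries of $\D^0$ vanish unless $i_1=i_2$ and $j_1=j_2$, while the entries of $\D^x$ and of $\D^y$ each vanish unless one of these two index-pairs coincides, this distinctness forces the left-hand side to vanish identically for \emph{every} admissible choice of $J_x$, $J_y$ and auxiliary indices. (Here $m,n\ge 2$ guarantees that admissible sets and pairs of distinct indices exist, so the statements are non-vacuous.) This already settles all four assertions in (iii), as well as the reducibility and quasi-reducibility assertions in (i) and in (ii) -- everything except the two ``irreducible'' claims there.

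Next I would prove those two irreducibility claims: that $\D^x$ and $\D^y$ are each irreducible in the one direction in which their entries are not forced to vanish. I would argue by contraposition. A reducibility witness in that direction supplies one index -- a vertex of $S$ or of $T$ -- together with a proper index subset $J$. Writing out the corresponding equation, using that all entries of $\A$ are nonnegative to conclude that each summand vanishes, and unwinding \eqref{def:d^x} (respectively \eqref{def:d^y}), one finds that an explicit family of edges of $G$ is absent. Adjoining an arbitrary second index of the complementary type then produces a proper partition of $S$ together with two vertices of $T$ -- or a proper partition of $T$ together with two vertices of $S$ -- witnessing that $G$ is bi-separable, contradicting the hypothesis. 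The two clauses ``$S$ is not $T$-separable'' and ``$T$ is not $S$-separable'' are used here, one apiece for $\D^x$ and $\D^y$.

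For (iv) I would treat irreducibility and quasi-irreducibility of $\Q=\D^0+\D^x+\D^y+\A$ separately, and this is where one must be careful. Fix a candidate reducibility witness for $\Q$ (so its auxiliary index is repeated). Along the index pattern it prescribes, all entries of $\D^0$, of $\A$, and of one of $\D^x,\D^y$ vanish -- again by the distinctness of the two outer indices, now combined with the repeated auxiliary index -- so the left-hand side for $\Q$ coincides with that of the remaining one of $\D^x,\D^y$; hence $\Q$ is irreducible because $\D^x$ and $\D^y$ are, by the previous paragraph. Fix instead a candidate \emph{quasi-}reducibility witness for $\Q$ (so its two auxiliary indices are distinct). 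Along \emph{that} pattern, all entries of $\D^0$, of $\D^x$ and of $\D^y$ vanish, so the left-hand side for $\Q$ coincides with that of $\A$; thus $\Q$ is $x$- (respectively $y$-) quasi-reducible if and only if $\A$ is. Since $G$ is not bi-separable, Proposition~\ref{prop:bi-sep_quasi-irr} says $\A$ is quasi-irreducible, and therefore so is $\Q$.

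The step I expect to be the real work is this last one: one has to track, entry by entry, exactly which of the four summands $\D^0$, $\D^x$, $\D^y$, $\A$ survive along the index pattern relevant to \emph{reducibility} versus along the pattern relevant to \emph{quasi-reducibility}. These surviving sets are different, and it is precisely this asymmetry that makes $\Q$ inherit irreducibility from $\D^x$ and $\D^y$ but inherit quasi-irreducibility from the adjacency tensor $\A$.
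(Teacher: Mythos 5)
Your proposal is correct in substance and follows essentially the same route as the paper: the reducibility and quasi-reducibility claims come from the sparsity patterns of $\D^0$, $\D^x$, $\D^y$; the irreducibility of $\D^x$ and $\D^y$ in their non-vanishing directions comes from non-bi-separability (the paper routes this through Proposition~\ref{prop:bi-sep_quasi-irr}, i.e., through the quasi-irreducibility of $\A$, which is the same thing); and part (iv) is inherited summand by summand. Your extra care in (iv) about which summands survive along each index pattern is harmless but not needed for the stated direction, since adding nonnegative tensors can only destroy reducibility, so $\Q\ge\D^x,\D^y,\A$ entrywise already gives everything.

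One substantive caveat: your phrase ``irreducible in the one direction in which their entries are not forced to vanish'' silently proves the \emph{label-swapped} version of items (i) and (ii). Read against the definitions \eqref{equ:x_reducible}--\eqref{equ:y_reducible}, the $x$-reducibility pattern has $i_1\ne i_2$ and $j_1=j_2$, so $\D^x$ (which vanishes unless $i_1=i_2$) is $x$-reducible, and the direction in which its entries survive is the $y$-direction; your argument therefore establishes that $\D^x$ is $y$-irreducible but $x$-reducible, whereas item (i) literally asserts the opposite labels (and dually for $\D^y$ in item (ii)). You cannot, as you claim, settle ``the reducibility assertion in (i)'' ($y$-reducibility of $\D^x$) by the vanishing argument: the $y$-reducibility pattern has the $i$-index repeated, and there $d^x_{ij_1ij_2}$ is a row sum of $\A$, which is positive by non-bi-separability. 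The paper's own proof contains the same swap (its displayed computation in (i) negates the $y$-reducibility condition yet is said to show $x$-irreducibility), so this is an inconsistency in the source rather than a flaw in your reasoning, but it should be flagged and resolved rather than absorbed into a direction-neutral phrasing.
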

\begin{proof}
It follows from Proposition~\ref{prop:bi-sep_quasi-irr}  that $\A$ is  quasi-reducible. Consequently, for any nonempty proper index subset $J_x{\subsetneq}  [m]$ and two distinct indices {$j_1, j_2 \in[n]$, $j_1 \not = j_2$}, we have
\begin{equation}\label{equ:weak-x_irreducible}
{a_{i_2j_1i_1j_2}+	a_{i_1j_1i_2j_2}}>0, \ \exists i_1\in J_x, \exists i_2\notin J_x,
\end{equation}
and for any   two distinct indices {$i_1, i_2 \in[m]$, $i_1 \not = i_2$}  and  nonempty  proper index subset  $J_y{\subsetneq}  [n]$, we have
\begin{equation}\label{equ:weak-y_irreducible}
{a_{i_1j_1i_2j_2}+a_{i_1j_2i_2j_1}}>0, \ \exists j_1\in J_y, \exists j_2\notin J_y.
\end{equation}

(i) By equation~\eqref{equ:weak-y_irreducible}, for any proper index  $i\in[m]$ and nonempty proper index subset $J_y{\subsetneq}  [n]$, there exist  $j_1\in J_y$ and  $j_2\notin J_y$ such that
\begin{equation*}
d^x_{ij_1ij_2}+	d^x_{ij_2ij_1}=\sum_{i_2=1}^m a_{ij_1i_2j_2}+	a_{ij_2i_2j_1} \ge a_{ij_1i_2j_2}+	a_{ij_2i_2j_1} > 0, \forall i_2\neq i.
\end{equation*}
This shows that $\D^x$ is $x$-irreducible. Furthermore, since $d^{x}_{i_1j_1i_2j_2}=0$ for any $j_1=j_2$ and $i_1\neq i_2$, we have  $\D^x$  is $y$-reducible and quasi-reducible, respectively.

(ii) By equation~\eqref{equ:weak-x_irreducible}, for any proper index  $j\in[n]$ and nonempty proper index subset $J_x{\subsetneq}  [m]$, there exist  $i_1\in J_x$ and  $i_2\notin J_x$ such that
\begin{equation*}
d^y_{i_1ji_2j}+	d^y_{i_2ji_1j} =\sum_{j_2=1}^m a_{i_1ji_2j_2}+a_{i_2ji_1j_2} \ge 		a_{i_1ji_2j_2}+a_{i_2ji_1j_2}> 0, \forall j_2\neq j.
\end{equation*}
This shows that $\D^y$ is $y$-irreducible. Furthermore, since $d^{x}_{i_1j_1i_2j_2}=0$ for any $i_1=i_2$ and $j_1\neq j_2$, we have  $\D^y$  is $x$-reducible and quasi-reducible, respectively.

(iii) It follows from $\D^0$ is a diagonal biquadratic tensor.

(iv) It follows directly from  $\Q=\A+\D^0+\D^x+\D^y$, $\A$ is quasi-irreducible,    $\D^x$ is $x$-irreducible, and $\D^y$ is $y$-irreducible.

This completes the proof.
\end{proof}

Suppose that $\lambda$ is an M-eigenvalue of $\A$. If $\A$ has a pair of M-eigenvectors $\x \in \Re^m$ and $\y \in \Re^n$, such that either $|\supp(\x)|=1$ or $|\supp(\y)| =1$, then we call $\lambda$ an $M^0$-eigenvalue of $\A$.
{Now we are ready to present the main result of this section.}
\begin{Thm} \label{Thm:QNBQ}
Suppose that $\A = \left(a_{i_1j_1i_2j_2}\right) \in NBQ(m,n)$ is quasi-irreducible, and $\lambda$ is an $M^+$ eigenvalue of $\A$.  Then $\lambda$ is either an $M^0$ eigenvalue or an $M^{++}$ eigenvalue of $\A$.
\end{Thm}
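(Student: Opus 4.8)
The plan is to argue directly with a fixed nonnegative M-eigenvector pair, combining a support analysis with the nonnegativity of $\A$. Concretely, I would fix M-eigenvectors $\vx \in \Re_+^m$ and $\vy \in \Re_+^n$ associated with $\lambda$ and set $J_x := \supp(\vx)$ and $J_y := \supp(\vy)$; both are nonempty because $\vx^\top \vx = \vy^\top \vy = 1$. If $|J_x| = 1$ or $|J_y| = 1$, then this pair already witnesses that $\lambda$ is an $M^0$-eigenvalue, and we are done. So the real work is the complementary case $|J_x| \ge 2$ and $|J_y| \ge 2$, where the goal becomes to show $J_x = [m]$ and $J_y = [n]$, so that $\vx,\vy$ are positive and $\lambda$ is an $M^{++}$-eigenvalue.

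In that case I would suppose, for contradiction, that $J_x \subsetneq [m]$, so that $J_x$ is a nonempty proper index subset. For each index $i \notin J_x$ the right-hand side $2\lambda x_i$ of \eqref{e5} is zero, and since every summand on the left is a product of nonnegative quantities, each summand must vanish. Restricting attention to the summands whose coefficients $x_{i_1} y_{j_1} y_{j_2}$ (from the first sum) and $y_{j_1} x_{i_2} y_{j_2}$ (from the second sum) are strictly positive — i.e., those with $i_1, i_2 \in J_x$ and $j_1, j_2 \in J_y$ — I would conclude that $a_{i_1 j_1 i j_2} = a_{i j_1 i_1 j_2} = 0$ for all $i_1 \in J_x$, $i \notin J_x$, $j_1, j_2 \in J_y$. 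After relabeling the index lying outside $J_x$, this is exactly the pattern $a_{i_2 j_1 i_1 j_2} + a_{i_1 j_1 i_2 j_2} = 0$ for all $i_1 \in J_x$, $i_2 \notin J_x$. Choosing any two distinct indices $j_1 \neq j_2$ from $J_y$ (available because $|J_y| \ge 2$), the nonempty proper subset $J_x$ together with $j_1, j_2$ exhibits $\A$ as $x$-quasi-reducible via \eqref{equ:weak-x_reducible}, contradicting the hypothesis; hence $J_x = [m]$. The mirror-image argument, now applied to \eqref{e6} for indices $j \notin J_y$ and using two distinct indices $i_1 \neq i_2$ drawn from $J_x$ (available because $|J_x| \ge 2$), forces the $y$-quasi-reducibility pattern $a_{i_1 j_1 i_2 j_2} + a_{i_1 j_2 i_2 j_1} = 0$ for $j_1 \in J_y$, $j_2 \notin J_y$, and therefore gives $J_y = [n]$. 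Then $\vx > \0$, $\vy > \0$, and $\lambda$ is an $M^{++}$-eigenvalue.

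I do not expect a serious obstacle in the computations themselves; the one point that needs care — and is, I think, the conceptual crux — is aligning the forced-zero pattern with the exact form of the quasi-reducibility condition, in particular the fact that it involves two \emph{distinct} indices. This is precisely why the singleton-support case must be peeled off first: when the relevant support has size at least two we can supply the required pair of distinct indices, whereas a singleton support would only yield the weaker $x$- or $y$-reducibility of \cite{CQ25} and produce no contradiction. That is exactly the mechanism that makes $M^0$-eigenvalues an unavoidable separate alternative in the quasi-irreducible setting, in contrast to the irreducible case treated in Theorem~\ref{Thm:eigpair_positive}.
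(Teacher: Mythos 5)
Your proposal is correct and follows essentially the same route as the paper's proof: peel off the singleton-support ($M^0$) case, then evaluate the eigenvalue equation at a zero coordinate and use nonnegativity to force the $x$- (resp.\ $y$-) quasi-reducibility pattern with two distinct indices drawn from the size-$\ge 2$ support, contradicting quasi-irreducibility. The only cosmetic difference is that the paper takes $J_x$ to be the complement of $\supp(\bar\vx)$ rather than the support itself, which is immaterial because the quasi-reducibility condition is symmetric under complementing $J_x$.
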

\begin{proof}
By the definition of M-eigenvalues and M-eigenvectors, we have
$${\A\bar\vx\bar \vy \cdot\bar \vy +}\A\cdot\bar \vy \bar \vx\bar \vy={2}\lambda \bar\vx \text{ and }\A\bar \vx\bar\vy \bar \vx\cdot +\A\bar \vx\cdot \bar \vx\bar \vy={2}\lambda \bar\vy.$$
By the properties of nonnegative biquadratic tensors, $\lambda$ is an $M^+$ eigenvalue of $\A$.   Assume that $\lambda$ is not an $M^0$ eigenvalue.

We now prove that $\lambda$ is an $M^{++}$ eigenvalue by showing that $\bar \vx> \0_m$ and $\bar \vy > \0_n$ utilizing the method of contradiction.

Suppose that $\bar\vx \ngtr \0_m$. Let $J_x=[m]\setminus \supp(\bar\vx)$. Then $J_x$ is a nonempty set. For any $i\in J_x$, we have
\begin{eqnarray*}
& & {\sum_{i_1=1}^m \sum_{j_1,j_2=1}^n a_{i_1j_1ij_2} \bar x_{i_1}\bar y_{j_1}\bar y_{j_2} +} \sum_{i_2=1}^m \sum_{j_1,j_2=1}^n a_{ij_1i_2j_2} \bar x_{i_2}\bar y_{j_1}\bar y_{j_2} \\
&=& {\sum_{i_1\notin J_x} \sum_{j_1,j_2\in\supp(\bar\vy)} a_{i_1j_1ij_2} \bar x_{i_1}\bar y_{j_1}\bar y_{j_2} +}\sum_{i_2\notin J_x} \sum_{j_1,j_2\in\supp(\bar\vy)} a_{ij_1i_2j_2} \bar x_{i_2}\bar y_{j_1}\bar y_{j_2}\\
&=& 0.
\end{eqnarray*}
This shows $a_{ij_1i_2j_2}=0$ for all  $i\in J_x$,  $i_2\notin J_x$, and $j_1,j_2\in\supp(\bar\vy)$.  As $\lambda$ is not an M$^0$ eigenvalue, we have $\left|\supp(\bar\vy)\right| \ge 2$.  This leads to a contradiction with the assumption that $\A$ is quasi-irreducible. Hence $\bar \vx> \0_m$. Similarly, we could show that $\bar \vy > \0_n$. Therefore, we have $\lambda > 0$ and is an M$^{++}$-eigenvalue.
\end{proof}

{The above theorem, together with Theorem \ref{Thm:rho=lmd_max}, forms the weak Perron-Frobenius theorem of quasi-irreducible nonnegative biquadratic tensors.}

%
%
%

The next example shows that a quasi-irreducible nonnegative biquadratic tensor may have no M$^{++}$ eigenvalue.

\begin{example}\label{ex4.4}
Let $\A\in NBQ(2,2)$ be defined by $a_{1111}=1$, $a_{2222}=2$, $a_{1212}=3$,  $a_{1122} = a_{1221} =a_{2112}=a_{2211}=1$ and all other elements are zeros. Then we may verify that $\A$ is reducible, but quasi-reducible.
Furthermore, the M$^+$-eigenvalues of $\A$ are
\[ 0,\ 1.0000,\     2.0000,\   3.0000,\]
and the corresponding eigenvectors are
\begin{eqnarray*}
\left\{\begin{array}{c}
	\vx = (0,\ 1)^\top,\\
	\vy = (1,\    0)^\top.
\end{array}\right.
& \left\{\begin{array}{c}
	\vx = (1,\    0)^\top,\\
	\vy = (1,\ 0)^\top,\\
\end{array}\right.
\left\{\begin{array}{c}
	\vx = (0,\ 1)^\top,\\
	\vy = (0,\    1)^\top,\\
\end{array}\right.
\left\{\begin{array}{c}
	\vx = (1,\    0)^\top,\\
	\vy = (0,\ 1)^\top.
\end{array}\right.
\end{eqnarray*}
\end{example}

\section{A Max-Min Theorem for Nonnegative Biquadratic Tensors}

Denote
$${S_+^{m-1}}=\{\vx\in\Re_+^m: \sum x_i^2=1\}$$
as the nonnegative section of the unit sphere surface  in the $m$-dimensional space and
$$   S_{++}^{m-1}=\{\vx\in\Re_{++}^m: \sum x_i^2=1\}$$
as the interior set of ${S_+^{m-1}}$.  For any $\vx\in \Re^m$ and $\vy\in \Re^n$, let
\begin{equation}\label{equ:gh}
\vg =\frac12 ( \A \cdot \vy\vx\vy + \A \vx \vy\cdot\vy), \ \ \vh=\frac12 ( \A \vx\cdot\vx\vy + \A \vx  \vy\vx\cdot).
\end{equation}
Let $\A\in NBQ(m,n)$ be  quasi-irreducible. We define the following two functions for all  $\vx \in \Re_+^m\setminus \{\0_m\}$ and $\vy \in \Re_+^n\setminus \{\0_n\}$.
\begin{eqnarray}\label{equ:uv}
v(\vx,\vy) = \min_{\substack{i: x_i>0,\\j: y_j>0}} \left\{\frac{g_i}{x_i}, \frac{h_j}{y_j}\right\}, \ \
u(\vx,\vy) = \max_{\substack{i: x_i>0,\\j: y_j>0}} \left\{\frac{g_i}{x_i}, \frac{h_j}{y_j}\right\}.
\end{eqnarray}
Then $v(\vx,\vy) \le u(\vx,\vy)$.
Here, we require the indices $i,j$ to be subsets of $[m]$ and $[n]$, respectively, to avoid the indeterminate form  $\frac00$. As illustrated  in Example~\ref{ex4.4}, an  M$^+$-eigenvalue may not necessarily be an  $M^{++}$-eigenvalue.
We also define
\begin{equation}\label{equ:rho_*^*}
\rho_* = \inf_{\vx\in   S_{+}^{m-1}, \vy\in   S_{+}^{n-1}} u(\vx,\vy) \text{ and } \rho^* = \sup_{\vx\in   S_{+}^{m-1}, \vy\in   S_{+}^{n-1}} v(\vx,\vy)
\end{equation}

By Theorem~\ref	{Thm:rho=lmd_max},  $\lambda_{\max}(\A)$    is attainable and coincides with  the M-spectral radius $\rho_M(\A)$ of $\A$,
and $\lambda_{\max}(\A)$ is an M$^+$-eigenvalue of $\A$.
The following theorem shows that  the value $\rho^*$  is attainable and is equal to $\rho_M(\A)$.

\begin{Thm} \label{Thm:max-min}
Suppose that $\A = \left(a_{i_1j_1i_2j_2}\right) \in NBQ(m,n)$.  Then we have
\[\rho^*=\lambda_{\max}(\A) =\rho_M(\A).\]
\end{Thm}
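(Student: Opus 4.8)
The plan is to establish $\rho^*=\lambda_{\max}(\A)$ by proving the two inequalities separately, and then to invoke Theorem~\ref{Thm:rho=lmd_max} for the remaining identity $\lambda_{\max}(\A)=\rho_M(\A)$. Throughout, the basic observation I would use is that $\langle \vg,\vx\rangle = \langle \vh,\vy\rangle = f(\vx,\vy)$ for all $\vx\in\Re^m$ and $\vy\in\Re^n$; this follows immediately from \eqref{equ:gh} by relabeling the summation indices, since each of $\vg$ and $\vh$ is (up to the factor $\tfrac12$) a symmetrized partial gradient of the quartic form $f$.

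For the lower bound $\rho^*\ge\lambda_{\max}(\A)$, I would exhibit an optimal test pair. By Theorem~\ref{Thm:rho=lmd_max}, $\lambda_{\max}(\A)$ is an $M^+$-eigenvalue, so there exist $\bar\vx\in S_+^{m-1}$ and $\bar\vy\in S_+^{n-1}$ satisfying \eqref{e5}--\eqref{e7}; rewritten via \eqref{equ:gh}, these say that at $(\bar\vx,\bar\vy)$ the vectors of \eqref{equ:gh} satisfy $\vg=\lambda_{\max}(\A)\,\bar\vx$ and $\vh=\lambda_{\max}(\A)\,\bar\vy$. Hence $g_i/\bar x_i=\lambda_{\max}(\A)$ for every $i$ with $\bar x_i>0$ and $h_j/\bar y_j=\lambda_{\max}(\A)$ for every $j$ with $\bar y_j>0$, so every term defining $v(\bar\vx,\bar\vy)$ equals $\lambda_{\max}(\A)$. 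Therefore $v(\bar\vx,\bar\vy)=\lambda_{\max}(\A)$, which gives $\rho^*\ge\lambda_{\max}(\A)$ and simultaneously shows that the supremum defining $\rho^*$ is attained.

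For the upper bound $\rho^*\le\lambda_{\max}(\A)$, I would fix any $\vx\in S_+^{m-1}$ and $\vy\in S_+^{n-1}$ and set $v=v(\vx,\vy)$. By the definition of $v$ as a minimum over the positive coordinates, $g_i\ge v x_i$ whenever $x_i>0$; multiplying by $x_i$ gives $g_i x_i\ge v x_i^2$, and this also holds (trivially, both sides being zero) when $x_i=0$, hence for all $i\in[m]$. Summing and using $\|\vx\|=1$ yields $f(\vx,\vy)=\langle\vg,\vx\rangle=\sum_i g_i x_i\ge v\sum_i x_i^2=v$. On the other hand, \eqref{lamax} gives $f(\vx,\vy)\le\lambda_{\max}(\A)$ for every nonnegative unit pair, so $v(\vx,\vy)\le\lambda_{\max}(\A)$; taking the supremum over $(\vx,\vy)$ yields $\rho^*\le\lambda_{\max}(\A)$. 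Combining the two inequalities gives $\rho^*=\lambda_{\max}(\A)$, and Theorem~\ref{Thm:rho=lmd_max} then supplies $\lambda_{\max}(\A)=\rho_M(\A)$.

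I do not expect a genuine obstacle here; the only point requiring a little care is the upper-bound step, where the minimum in $v(\vx,\vy)$ runs only over the positive coordinates, so one must use the sign of the coordinates of $\vx$ (and of $\vy$) to promote the coordinatewise inequality $g_i x_i\ge v x_i^2$ to all of $[m]$ before summing against the normalization $\|\vx\|^2=1$. Note that quasi-irreducibility of $\A$ is not used anywhere in this argument, which is consistent with its absence from the hypotheses of the theorem; it would become relevant only for a companion min-max statement about $\rho_*$ and $u$.
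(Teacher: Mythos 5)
Your proof is correct and follows essentially the same route as the paper: the lower bound via the maximizer of \eqref{lamax} (an M$^+$-eigenvector pair), and the upper bound via the coordinatewise inequality $g_i x_i \ge v x_i^2$ summed against the normalization to get $v(\vx,\vy)\le f(\vx,\vy)\le\lambda_{\max}(\A)$. The only difference is presentational — the paper packages the upper bound as an $\epsilon$-contradiction while you argue it directly — and your explicit handling of the zero coordinates is a point the paper glosses over.
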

\begin{proof}
Let $\bar \vx,\bar \vy$ denote the optimal solutions to  problem~\eqref{lamax}. Then we have $v(\bar \vx,\bar \vy)=\lambda_{\max}(\A)=\rho_M(\A)$, which shows that  $\rho^* \ge \lambda_{\max}(\A)$. Next, we prove that  $\rho^* = \lambda_{\max}(\A)$ using the method of contradiction.

Suppose that $\rho^*> \lambda_{\max}(\A)$.
Then for any $\epsilon>0$, there is  $\tilde\vx\in   S_{+}^{m-1}$ and $\tilde \vy\in   S_{+}^{n-1}$ such that
$v(\tilde \vx,\tilde \vy)\ge \rho^*- \epsilon$.
By choosing $\epsilon = \frac{1}{2}(\rho^*- \lambda_{\max}(\A))$, we have
$$v(\tilde \vx,\tilde \vy)\ge\frac{1}{2}(\rho^*+ \lambda_{\max}(\A)).$$
Therefore, it follows that $\tilde g_i\ge \frac{1}{2}(\rho^*+ \lambda_{\max}(\A)) \tilde x_i^*$ and  $\tilde h_j\ge\frac{1}{2}(\rho^*+ \lambda_{\max}(\A))\tilde y_j^*$ for all $i\in[m]$ and $j\in[n]$. Consequently, we have  $\A\tilde\vx\tilde\vy\tilde\vx\tilde\vy = \tilde\vg^\top \tilde\vx=\tilde\vh^\top \tilde\vy\ge \frac{1}{2}(\rho^*+ \lambda_{\max}(\A))$.
This contradicts   the definition of $\lambda_{\max}$ in \eqref{lamax}.
Thus, we have $\rho^*= \lambda_{\max}(\A)$  and completes the proof.
\end{proof}

\section{The Largest M-eigenvalue of A Nonnegative Biquadratic Tensor}

Suppose that $\A \in BQ(m,n)$.  Then the problem for computing its largest
M-eigenvalue problem is an NP-hard problem.   This was proved in \cite{LNQY10}.   However, when $\A \in NBQ(m,n)$.   This problem is different.   This is just like in the cubic tensor case.   In general, the problem to computing the spectral radius of a high order cubic tensor is an NP-hard problem.  But there are many efficient algorithms to compute the largest eigenvalue of a nonnegative cubic tensor \cite{QL17}.
Thus, we have the following problem.

Problem 1: To find the largest M-eigenvalue of a nonnegative biquadratic tensor $\A \in NBQ(m,n)$.   Theoretically, is this problem polynomial-time solvable?  Practically, are there efficient algorithms to solve this problem?

By Theorem \ref{Thm:rho=lmd_max}, the largest M-eigenvalue of a nonnegative biquadratic tensor is an M$^+$-eigenvalue.    This is the first step for solving this problem.

We now consider a subproblem of Problem 1.

Problem 2: To find the largest M-eigenvalue of an irreducible nonnegative biquadratic tensor $\A \in NBQ(m,n)$.   Theoretically, is this problem polynomial-time solvable?  Practically, are there efficient algorithms to solve this problem?

By Theorem \ref{Thm:eigpair_positive}, the largest M-eigenvalue of an irreducible nonnegative biquadratic tensor is an M$^{++}$-eigenvalue.  A Collatz algorithm was proposed in \cite{CQ25} to solve this problem

By Theorem \ref{Thm:max-min}, can we also construct a Collatz algorithm to solve Problem 1?

This paper raised another subproblem of Problem 1.

Problem 3: To find the largest M-eigenvalue of a quasi-irreducible nonnegative biquadratic tensor $\A \in NBQ(m,n)$.   Theoretically, is this problem polynomial-time solvable?  Practically, are there efficient algorithms to solve this problem?

Theorem \ref{Thm:QNBQ} indicates that for a quasi-irreducible nonnegative biquadratic tensor, the largest M-eigenvalue $\lambda_{\max}(\A)$ is either an M$^0$-eigenvalue or an M$^{++}$-eigenvalue.  Can we utilize this information to solve Problem 3?

	\bigskip	
	
	
	{{\bf Acknowledgment}}
	This work was partially supported by Research  Center for Intelligent Operations Research, The Hong Kong Polytechnic University (4-ZZT8),    the National Natural Science Foundation of China (Nos. 12471282 and 12131004), the R\&D project of Pazhou Lab (Huangpu) (Grant no. 2023K0603),  the Fundamental Research Funds for the Central Universities (Grant No. YWF-22-T-204), and Jiangsu Provincial Scientific Research Center of Applied Mathematics (Grant No. BK20233002).

	{{\bf Data availability} Data will be made available on reasonable request.

		{\bf Conflict of interest} The authors declare no conflict of interest.}

	


\end{document}